\documentclass[12pt,a4paper]{amsart}
\pdfoutput=1
\usepackage{amsmath,amsfonts,amssymb,amsthm}  
\usepackage[utf8]{inputenc}




\theoremstyle{plain}
\newtheorem{theorem}{Theorem}[section]

\newtheorem{lemma}[theorem]{Lemma}

\theoremstyle{definition}
\newtheorem{definition}[theorem]{Definition}

\newtheorem{remark}[theorem]{Remark}

\numberwithin{equation}{section}
\newtheorem*{theorem*}{Theorem}

\def\Xint#1{\mathchoice
{\XXint\displaystyle\textstyle{#1}}
{\XXint\textstyle\scriptstyle{#1}}
{\XXint\scriptstyle\scriptscriptstyle{#1}}
{\XXint\scriptscriptstyle\scriptscriptstyle{#1}}
\!\int}
\def\XXint#1#2#3{{\setbox0=\hbox{$#1{#2#3}{\int}$}
\vcenter{\hbox{$#2#3$}}\kern-.5\wd0}}

\def\dashint{\Xint-}

\newcommand{\dmu}{\, \mathrm{d}\mu}
\newcommand{\dx}{\, \mathrm{d}x}

\DeclareMathOperator{\BMO}{BMO}

\providecommand{\abs}[1]{ \lvert#1  \rvert}
\providecommand{\norm}[1]{ \lVert#1  \rVert}

%
\newcommand{\art}[6]{{\sc #1, \rm #2, \it #3 \bf #4 \rm (#5), \mbox{#6}.}}
\newcommand{\book}[3]{{\sc #1, \it #2, \rm #3.}}
\newcommand{\AND}{{\rm and }}

\title[Homeomorphisms of the Heisenberg group preserving
$\BMO$]{Homeomorphisms of the Heisenberg group preserving $\BMO$}

\author{Riikka Korte}
\address[R.K]{University of Helsinki,
  Department of Mathematics and Statistics,
  P.O. Box 68, FI-00014 University of Helsinki, Finland}
\email{riikka.korte@helsinki.fi}

\author{Niko Marola}
\address[N.M.]{University of Helsinki,
  Department of Mathematics and Statistics,
  P.O. Box 68, FI-00014 University of Helsinki, Finland}
\email{niko.marola@helsinki.fi}

\author{Olli Saari}
\address[O.S.]{Aalto University,
  Department of Mathematics and Systems Analysis,
  P.O. Box 11100, FI-00076 Aalto, Finland}
\email{olli.saari@aalto.fi}

\date{}

\thanks{The research is supported by the Academy of Finland and the V\"ais\"al\"a Foundation.}
\subjclass[2010]{30L10, 42B35}
\keywords{Carnot group, Function of bounded mean oscillation, Heisenberg group, Metric space, Quasiconformal mapping, Stratified group.}

\begin{document}

\begin{abstract}
 We provide a new geometric proof of Reimann's theorem characterizing quasiconformal mappings as the ones preserving functions of bounded mean oscillation. While our proof is new already in the Euclidean spaces, it is applicable in Heisenberg groups as well as in more general
  stratified nilpotent Carnot groups.
\end{abstract}

\maketitle

\section{Introduction}

It is well known that composition with quasiconformal mappings of the
Euclidean spaces preserves functions of bounded mean oscillation, and
with an additional differentiability assumption the preservation property
implies that the mapping is quasiconformal. This result was proved by
Reimann~\cite{Re-qc} in $\mathbb{R}^n$, $n\geq 2$. The corresponding
problem on the real-line was solved by Jones~\cite{Jones}. Later
analogous results were proved for maximal functions and Muckenhoupt
weights, see Uchiyama~\cite{Uchiyama}. 

Astala~\cite{Astala} showed that by localizing the preservation property, the differentiability
assumptions could be removed.  Using the approach initiated by Astala, Staples~\cite{Staples-ainfty, Staples-doubling} proved analogues of Uchiyama's theorems, and finally, following Astala's idea of localizing the preservation property, Vodop'yanov and Greshnov generalized the Reimann type characterization of quasiconformal mappings to Carnot groups and Carnot--Carath\'eodory spaces in \cite{VoGr}. However, even in Euclidean spaces, it remains open whether preservation of $\BMO$ alone implies quasiconformality. The known results assume either differentiability or \textit{local} $\BMO$ invariance.

In this paper, we take another, more geometric, point of view and give a new proof of Reimann's theorem that applies in the Heisenberg group. Reimann's original proof relied on a construction of suitable functions of bounded mean oscillation, whereas our approach makes use of a general characterization of $\BMO$-preserving mappings due to Gotoh \cite{Gotoh}, or more precisely its generalization to doubling metric measure spaces by Kinnunen et al.~\cite{KiKoMaSh}. We do not assume local $\BMO$ invariance, but we will use assumptions more close to Reimann's original approach instead. We mention that our proof of Reimann's theorem can also be applied in more general Carnot groups, we refer the reader to the discussion in Remark~\ref{rmk}. But more importantly, it provides a new simple proof in the Euclidean setting with obvious changes.

\section{Preliminaries}

We denote by $\mathbb{H}^{n}$ the $n$th Heisenberg group, that is, the
set $\mathbb{C}^{n} \times \mathbb{R}$ endowed with the group
operation
\[(z,t)(z',t') = ( z+z', t+t'+ 2 {\rm Im} \, \sum_j z_j \overline{z_j'} ) .\]
The inverse of $(z,t)$ is $(z,t)^{-1} = (-z,-t)$. For $z \in \mathbb{C}$, we denote ${ \rm Re} \, z = x$ and ${\rm Im }\, z = y$. The horizontal distribution of the Heisenberg group is spanned by the vector fields
\[X_j = \frac{\partial}{\partial x_j} + 2y_j \frac{\partial}{\partial
  t} \quad \textrm{and} \quad Y_j = \frac{\partial}{\partial y_j} -
2x_j \frac{\partial}{\partial t} \] as $j = 1,\ldots, n$, and the only
non-trivial commutator relation of the tangent bundle is $[X_j,Y_j] =
-4 T$ where $T = \partial/\partial t$ is the vertical direction.

A path $\gamma : [0,1] \to \mathbb{H}^{n}$ is horizontal if
$\gamma'(t)$ is horizontal for all $t$. We define the
Carnot--Carath\'eodory distance as the length of the shortest
horizontal path joining two points in $\mathbb{H}^n$, and it will be
denoted by $d$. Using this metric, the left-translations are
isometries of the Heisenberg group, i.e. $d(p,q) = d(lp,lq)$, where
$p,q,l \in \mathbb{H}^{n}$.

The Lebesgue measure on $\mathbb{R}^{2n+1}$ is the bi-invariant Haar
measure for $\mathbb{H}^{n}$, and it will be denoted by $\abs{ \cdot
}$. Moreover, we define the dilations by positive reals $\delta$ as
$\delta (z,t) = (\delta z, \delta ^{2} t)$. The dilations satisfy
$\abs{\delta E} = \delta^{2n+2} \abs{E}$ and $d (\delta z,\delta z') =
\delta d (z,z')$.

There is another metric on $\mathbb{H}^{n}$ that is bi-Lipschitz
equivalent to the Carnot--Carath\'eodory metric, namely, the one induced
by the Koranyi norm. For $p=(z,t) \in \mathbb{H}^{n}$, we define
\[d_{K}(p,0) = (\abs{z}^{4} + \abs{t}^{2})^{1/4}.\]
Hence the metric notions can be defined using whichever metric.

A homomorphism $L : \mathbb{H}^{n} \to \mathbb{H}^{n}$ is homogeneous
if it commutes with dilations. A mapping $f: \mathbb{H}^{n} \to
\mathbb{H}^{n}$ is said to be Pansu differentiable at $p \in
\mathbb{H}^{n}$ if there is a homogeneous homomorphism $L$ such that
\[ \frac{d(f(p)^{-1}f(hp), L(h) )}{d(h,0)} \to 0 \] as $h \to 0$. The
homogeneous homomorphism $L$ satisfying the limit is unique and is
called Pansu differential of $f$ at $p$. Note that the homogeneous
homomorphism $L$ is continuous if $f$ is. We refer the reader, for example,
to \cite{Magnani} for a detailed discussion on differential calculus
on general stratified groups.

Some results in this paper are most conveniently stated in more
general setting of doubling metric measure spaces. The Heisenberg
group and more general Carnot groups discussed in Remark~\ref{rmk},
however, are known to be particular examples of such general
spaces. For brevity, we refer the reader to \cite{Heinonen} and
\cite{Bjorn} for this concept of a doubling metric measure space
$(X,d,\mu)$ and also for all the necessary definitions.

A locally integrable function $u$ on a metric measure space with a
doubling measure $\mu$ is said to be of bounded mean
oscillation, abbreviated to $\BMO(X)$, if its mean oscillations on metric
balls $B\subset X$ are uniformly bounded, that is,
\[ 
\sup_{B} \dashint_{B} \abs{u-u_{B}} \dmu =: \norm{u}_{\BMO} < \infty.
\]
Here both $u_B$ and the barred integral mean the integral average of
$u$ over a ball $B$.  
One of the most fundamental properties of the class $\BMO$ is the
following John--Nirenberg lemma. For a proof, we refer the reader to \cite{Bjorn}.

\begin{lemma} 
\label{lemma:JN}
Let $X$ be a metric measure space with a doubling measure $\mu$. Suppose that $u\in \BMO(5B)$. Then for every $\lambda>0$
\[
\mu(\{x\in B\,:\,|u(x)-u_{B}|>\lambda\})\leq
2\mu(B)\exp\left(-\frac{A\lambda}{\|u\|_{\BMO}}\right),
\]
where $A$ depends only on the doubling constant of the measure $\mu$.
\end{lemma}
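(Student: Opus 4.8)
The plan is to follow the classical Calderón--Zygmund stopping-time argument, carried out with a covering lemma in place of dyadic subdivision. After normalizing so that $\norm{u}_{\BMO}=1$ (the general case follows upon replacing $u$ by $u/\norm{u}_{\BMO}$), I would introduce the normalized distribution function
\[
F(\lambda) = \sup_{B'} \frac{1}{\mu(B')}\,\mu\bigl(\{x \in B' : \abs{u(x) - u_{B'}} > \lambda\}\bigr),
\]
where the supremum runs over all balls $B'$ with $5B' \subset 5B$, so that the $\BMO$ bound applies on every dilate that enters the argument. Since $F(\lambda)\le 1$ for every $\lambda$, the whole estimate reduces to a self-improving recursion of the form $F(\lambda) \le \theta\, F(\lambda - b)$ with constants $0<\theta<1$ and $b>0$ depending only on the doubling constant; iterating this in $\lambda$ produces the exponential decay.

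To obtain the recursion, I fix an admissible ball $B'$ and decompose $u-u_{B'}$ at a threshold $\lambda_0>1$ to be chosen. For each Lebesgue point $x$ of the super-level set with $\lambda>\lambda_0$, small balls around $x$ have average of $\abs{u-u_{B'}}$ exceeding $\lambda_0$, so one may select a \emph{near-maximal-radius} ball $B_x\ni x$ on which $\dashint_{B_x}\abs{u-u_{B'}}\dmu>\lambda_0$; a slightly larger ball then has average at most $\lambda_0$, whence doubling bounds the average on $B_x$ from above by $c_D\lambda_0$. Applying the basic $5r$-covering lemma to $\{B_x\}$ extracts a disjoint subfamily $\{B_i\}$ whose dilates $5B_i\subset 5B$ cover the level set; this covering step, together with the near-maximal-radius stopping, is exactly where the hypothesis $u\in\BMO(5B)$ is used. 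The selection yields three facts: disjointness and Chebyshev give $\sum_i \mu(B_i) \le \lambda_0^{-1}\int_{B'}\abs{u-u_{B'}}\dmu \le \lambda_0^{-1}\mu(B')$; the upper bound on averages gives $\abs{u_{5B_i}-u_{B'}} \le c_D\lambda_0$; and the Lebesgue differentiation theorem (valid in doubling spaces) gives $\abs{u-u_{B'}}\le\lambda_0$ almost everywhere off $\bigcup_i 5B_i$.

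Combining these, for $\lambda>c_D\lambda_0$ the super-level set lies (up to a null set) in $\bigcup_i 5B_i$, and on each $5B_i$ the triangle inequality gives $\abs{u-u_{B'}}>\lambda \Rightarrow \abs{u-u_{5B_i}}>\lambda-c_D\lambda_0$. Hence
\[
\mu\bigl(\{x \in B' : \abs{u - u_{B'}} > \lambda\}\bigr) \le F(\lambda - c_D\lambda_0)\sum_i \mu(5B_i) \le \frac{c_D'}{\lambda_0}\,F(\lambda - c_D\lambda_0)\,\mu(B'),
\]
using $\mu(5B_i)\le c_D'\mu(B_i)$ from doubling. Taking the supremum over $B'$ gives $F(\lambda)\le c_D'\lambda_0^{-1}F(\lambda-c_D\lambda_0)$. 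Choosing $\lambda_0=2c_D'$ makes $\theta:=c_D'\lambda_0^{-1}=\tfrac12$ and $b:=c_D\lambda_0$, so iteration yields $F(\lambda)\le 2^{-\lfloor\lambda/b\rfloor}\le 2\exp(-A\lambda)$ with $A=b^{-1}\ln 2$ depending only on the doubling constant; the factor $2$ in the conclusion is precisely the $\theta^{-1}$ arising from the base step.

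The step I expect to be the main obstacle is the upper bound $\abs{u_{5B_i}-u_{B'}}\le c_D\lambda_0$ on the averages of the selected balls. In the Euclidean dyadic proof this is immediate from maximality, since the dyadic parent has controlled average; in a general doubling metric space there is no canonical parent, so the bound must instead be wrung from the near-maximal-radius stopping together with doubling, and it is exactly this device — rather than the routine iteration, the weak-type measure bound, or the Lebesgue differentiation step — that forces the enlargement from $B$ to $5B$ and pins the dependence of $A$ on the doubling constant alone.
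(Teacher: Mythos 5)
The paper does not prove this lemma at all --- it simply refers the reader to Bj\"orn--Bj\"orn --- and your Calder\'on--Zygmund stopping-time argument, run with the $5r$-covering lemma in place of dyadic subdivision and closed by the self-improving recursion $F(\lambda)\le\theta F(\lambda-b)$, is exactly the standard proof found in that reference, so the approaches coincide. The sketch is correct, and you rightly single out the one delicate point: the near-maximal-radius stopping must be arranged (by taking $\lambda_0$ large enough, depending only on doubling) so that the stopped balls have radius a fixed small fraction of $r(B')$, which both yields the bound $\abs{u_{5B_i}-u_{B'}}\le c_D\lambda_0$ and keeps the balls $5B_i$ inside the admissible family, and it is precisely this that consumes the enlargement from $B$ to $5B$.
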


A homeomorphism of metric spaces $f: X \to X $ respecting null sets is
said to be $\BMO$-preserving if $\norm{u \circ f^{-1}}_{\BMO} \leq C_f
\norm{u }_{\BMO}$. This property has a characterization in terms of
densities, and the following theorem was first found by Gotoh in
\cite{Gotoh}. The metric measure space version of the theorem can be
found in \cite{KiKoMaSh}.

\begin{theorem} 
\label{thm:gotoh}
A homeomorphism $f : X \to X$ on a metric measure space with a
doubling measure $\mu$ such that $f^{-1}(E)$ is a null set for all
null sets $E \subset X$ is $\BMO$-preserving if and only if the
following holds: For any pair of measurable sets $E_1 , E_2$ of $X$ we
have
\[
\sup_{B} \min _{i} \frac{\mu(E_i \cap B)}{\mu(B)} \leq K \left(
  \sup_{B} \min_{i} \frac{\mu(f(E_i) \cap B)}{ \mu(B) }
\right)^{\alpha}\] for universal constants $\alpha ,K >0$. The
supremum is taken over all metric balls in $X$.
\end{theorem}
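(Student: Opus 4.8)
The plan is to interpret the density condition as an avatar of the $\BMO$ norm and to move it across $f$ by the elementary observation that $f$ carries level sets to level sets: if $g=u\circ f^{-1}$ then $\{g\ge a\}=f(\{u\ge a\})$, so the functional
\[
M(E_1,E_2):=\sup_{B}\min_i\frac{\m{E_i\cap B}}{\m{B}}
\]
evaluated at the level sets of $g$ is exactly $M$ evaluated at the $f$-images of the level sets of $u$. The whole theorem then reduces to a quantitative equivalence, for a single function, between its $\BMO$ norm and the co-concentration $M$ of its super- and sub-level sets.

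First I would record the easy half of that equivalence, which is immediate from the John--Nirenberg lemma (Lemma~\ref{lemma:JN}): \emph{if $g\in\BMO$ and $F_1\subset\{g\ge a\}$, $F_2\subset\{g\le b\}$ with $a>b$, then}
\[
M(F_1,F_2)\le 2\exp\!\left(-\frac{A(a-b)}{2\norm{g}_{\BMO}}\right).
\]
Indeed, on any ball $B$ the mean $g_B$ cannot be within $(a-b)/2$ of both $a$ and $b$, so whichever of $F_1\cap B$, $F_2\cap B$ lies on the far side is contained in $\{\abs{g-g_B}\ge(a-b)/2\}$, and Lemma~\ref{lemma:JN} bounds its relative measure by the stated exponential. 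The reverse inequality --- that control of $M$ on all pairs of separated level sets forces $\norm{g}_{\BMO}$ to be small --- is equally elementary once one works ball by ball: testing the level-set bound against a median of $g$ on $B$ yields a uniform exponential decay of the distribution of $g$ about that median, and integrating the tail recovers the mean oscillation. These two facts give the implication ``density condition $\Rightarrow$ $\BMO$-preservation'': one applies them to $g=u\circ f^{-1}$, whose separated level sets are the $f$-images of those of $u$, so the density hypothesis transfers the good distributional behaviour of $u$ to $g$.

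The opposite implication is where the real work lies, and I expect it to be the main obstacle: given \emph{arbitrary} measurable sets $E_1,E_2$ I must manufacture a genuine $\BMO$ function out of them. Concretely I would prove that for any pair with $M(E_1,E_2)=m$ there is $g\in\BMO$ with $\norm{g}_{\BMO}\le C_0$ (an absolute constant) and
\[
E_1\subset\{g\ge a\},\qquad E_2\subset\{g\le b\},\qquad a-b\ge c_0\log\tfrac1m .
\]
The natural candidate is a truncated logarithmic potential adapted to the thinner set: a chaining/Whitney-type function that gains a definite amount each time one crosses a ball in which, by the very definition of $M(E_1,E_2)=m$, one of the two sets has density at most $m$, the truncation at height $\log(1/m)$ producing the required gap. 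The delicate points are to realise the full gap from the merely ball-by-ball smallness recorded by $m$, and to verify the uniform $\BMO$ bound --- for which I would estimate the mean oscillation on an arbitrary ball through a stopping-time decomposition subordinate to the doubling structure, in the spirit of the classical fact that the logarithm of a maximal function belongs to $\BMO$ with a universal bound. Granting such a $g$ for the pair $f(E_1),f(E_2)$, transporting it by $f$ and invoking the preservation property (applied, as the direction requires, to $f$ or to its inverse) yields a function of comparable $\BMO$ norm separating $E_1,E_2$ by the same gap; feeding this into the John--Nirenberg estimate above gives $M(E_1,E_2)\le 2\,M(f(E_1),f(E_2))^{\alpha}$ with $\alpha$ depending only on $C_0$, $C_f$ and the doubling constant, which is the assertion with $K=2$.
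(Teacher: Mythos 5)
You should first be aware that the paper does not prove this statement at all: Theorem~\ref{thm:gotoh} is imported as a known result of Gotoh \cite{Gotoh}, in the metric measure space form of \cite{KiKoMaSh}, so there is no in-paper argument to compare against. Judged on its own merits, your outline has the right architecture --- it is essentially the strategy of those papers (John--Nirenberg for one implication, a separating logarithmic $\BMO$ function for the other) --- but it contains two genuine gaps. The first is the construction you explicitly defer: given arbitrary measurable $E_1,E_2$ with $M(E_1,E_2)=m$, you must produce $g$ with $\norm{g}_{\BMO}\le C_0$ universal, $g\ge a$ on $E_1$, $g\le b$ on $E_2$, and $a-b\gtrsim \log(1/m)$. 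You name a ``truncated logarithmic potential / chaining function'' but never define it, and the two points you yourself flag as delicate (extracting the full logarithmic gap from the merely ball-by-ball smallness encoded in $m$, and the uniform $\BMO$ bound) are exactly where all the work in \cite{Gotoh, KiKoMaSh} lies. As written, this half is a statement of intent rather than a proof.

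The second gap is the directional bookkeeping, and it affects even your ``easy'' half. With $\BMO$-preservation defined as $\norm{u\circ f^{-1}}_{\BMO}\le C_f\norm{u}_{\BMO}$, the displayed condition bounds $M(E_1,E_2)$ \emph{above} by $K\,M(f(E_1),f(E_2))^{\alpha}$. If $g=u\circ f^{-1}$ and $E_i$ are level sets of $u$, then the level sets of $g$ are $f(E_i)$, so the hypothesis only gives a \emph{lower} bound for $M(f(E_1),f(E_2))$ in terms of the quantity $M(E_1,E_2)$ that John--Nirenberg has already made small; it says nothing about the distribution of $g$. What the condition transfers directly is good distributional behaviour from the level sets of $v$ to those of $v\circ f$, i.e.\ it yields boundedness of $v\mapsto v\circ f$, not of $u\mapsto u\circ f^{-1}$. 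Likewise, in the converse direction you invoke the preservation property ``applied to $f$ or to its inverse'', but boundedness of $u\mapsto u\circ f^{-1}$ does not formally imply boundedness of $u\mapsto u\circ f$: substituting $u=v\circ f$ only gives $\norm{v}_{\BMO}\le C_f\norm{v\circ f}_{\BMO}$, a lower bound. That the two composition operators are simultaneously bounded is itself one of Gotoh's theorems (proved \emph{via} this characterization) and cannot be assumed. Both defects are repairable, but they must be addressed explicitly; the argument does not close as it stands.
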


Given a homeomorphism $f:X \to X$, then for every $x\in X$ and $r>0$ set
\[
K_f(x,r) =  \frac{\sup\{d(f(x),f(y)):\, d(x,y)=r\}}{\inf\{d(f(x),f(y)):\ d(x,y)=r\}}.
\] 
We recall that $f$ is called quasiconformal if 
\[
\limsup_{r\to 0}K_f(x,r) \leq K
\]
for all $x \in X$ and for some uniform $K<\infty$. A homeomorphism $f$
is quasi-symmetric, if the inequality above is satisfied without
$\limsup$, that is, it is satisfied for all $x\in X$ and all $r>0$. We
refer the reader to \cite{HeKoInv, HeKoActa, HKST} for more on
quasiconformal mappings between metric spaces.

\section{Characterization via $\BMO$-maps}

In this section we prove that $\BMO$ preserving maps are
quasiconformal and vice versa, provided that one assumes some
additional regularity. The proof of the necessity part is new, and it
shows that Gotoh's characterization of $\BMO$-maps is one of the
underlying phenomena connecting quasiconformal and $\BMO$-preserving
maps. The following is our main result, see also Remark~\ref{rmk}.

\begin{theorem}
\label{thm:equivalence}
Let $f:\mathbb{H}^{n} \to \mathbb{H}^{n}$ be an almost everywhere
Pansu differentiable homeomorphism such that $f^{-1}(E)$ is a null set
for all null sets $E$. Then $f$ is quasiconformal if and only if it is
$\BMO$-preserving.
\end{theorem}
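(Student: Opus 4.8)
The plan is to route both implications through Gotoh's density characterization (Theorem~\ref{thm:gotoh}), treating the geometric inequality
\[
\sup_{B}\min_i \frac{\mu(E_i\cap B)}{\mu(B)} \leq K\Big(\sup_B \min_i \frac{\mu(f(E_i)\cap B)}{\mu(B)}\Big)^{\alpha}
\]
as the common bridge between the metric distortion $K_f$ and the analytic $\BMO$-preservation property. The key structural observation is that this inequality is invariant under the left-translations (which are isometries) and under the anisotropic dilations $\delta_r$ (which scale $\mu$ by a fixed power $r^{2n+2}$); hence it is a genuinely scale-free condition and can be tested at the infinitesimal scale governed by the Pansu differential.

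For the direction ``quasiconformal $\Rightarrow$ $\BMO$-preserving'', I would first invoke that in $\mathbb{H}^n$ — an Ahlfors regular Loewner space — quasiconformal homeomorphisms are quasisymmetric (cf. \cite{HeKoInv, HeKoActa}), so that $f$ distorts balls and annuli in a controlled, scale-uniform way. A quasisymmetric homeomorphism of such a space satisfies a measure-distortion (essentially an $A_\infty$-type) estimate: for a ball $B$ and measurable $E\subset B$, the ratio $\mu(f(E))/\mu(f(B))$ is bounded by a power of $\mu(E)/\mu(B)$, and images of balls are comparable to balls. Feeding these comparisons into the right-hand side of Gotoh's inequality verifies the density condition, and Theorem~\ref{thm:gotoh} then yields $\BMO$-preservation.

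The substantial and new direction is ``$\BMO$-preserving $\Rightarrow$ quasiconformal''. Assuming the Gotoh condition, I would fix a point $x$ at which $f$ is Pansu differentiable, let $L$ be its differential, and rescale $f$ near $x$ by $F_r(h)=\delta_{1/r}\big(f(x)^{-1}f(x\,\delta_r h)\big)$; by definition of the Pansu differential $F_r\to L$ locally uniformly. Since the density condition is scale invariant, it passes to the limit and must hold for the homogeneous homomorphism $L$ itself, and it then suffices to bound the eccentricity of $L(B(0,1))$, which is precisely $\lim_{r\to0}K_f(x,r)$, by a constant depending only on $K,\alpha$. I would argue by contrapositive: if $L$ were highly eccentric, then $L(B(0,1))$ would be a long, thin region collapsed along some horizontal direction $v$, and I would choose two sets $E_1,E_2$ occupying a definite fraction of a common source ball (so the left-hand side is of order one) but whose images are pushed into the thin slab and separated across $v$, so that no single ball captures a substantial fraction of both $f(E_1)$ and $f(E_2)$; the right-hand side is then controlled by a negative power of the eccentricity, contradicting the inequality once the eccentricity is large. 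Running this over the full-measure set of differentiability points bounds $\limsup_{r\to0}K_f(x,r)$ uniformly.

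The main obstacle is this last set construction carried out honestly in the sub-Riemannian geometry: balls are Koranyi / Carnot--Carath\'eodory balls, and the differential $L$ acts anisotropically on the horizontal layer and on the center, so ``thinness in a collapsing direction'' must be quantified against these non-Euclidean balls rather than Euclidean slabs. Converting the resulting density bounds into a usable smallness — likely via the doubling property and the John--Nirenberg estimate of Lemma~\ref{lemma:JN} — is where the real work lies. A secondary subtlety is upgrading the distortion bound, which the measure-theoretic argument delivers at almost every point, to the everywhere-$\limsup$ statement in the definition of quasiconformality; here I would lean on the cross-scale regularity that the density condition itself propagates, together with the continuity of $f$ and of its (continuous) differential.
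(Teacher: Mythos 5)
Your overall architecture is sound and, for the necessity direction, coincides with the paper's: test Gotoh's density condition (Theorem~\ref{thm:gotoh}) against the Pansu differential $L$ at a point of differentiability, and show that large eccentricity of $L$ forces the right-hand side to be small while the left-hand side stays of order one. But the proposal stops exactly where the proof has to be done: you describe the choice of $E_1,E_2$ only as a ``thin slab'' picture and explicitly defer the construction as ``the main obstacle''. In the paper this step is short and needs neither slabs nor Lemma~\ref{lemma:JN}: take $x\in\partial B(0,\tfrac{15}{16}r)$ realizing $d(L(x),L(0))=\lambda_{\max}\,d(x,0)$, set $E_1=B(x,\tfrac1{16}r)$ and $E_2=B(0,\tfrac1{16}r)$; since $d(L(a),L(b))=d(a,b)\,d(L(v),0)\le\lambda_{\max}d(a,b)$ for a suitable unit $v$, the triangle inequality gives $\dist(L(E_1),L(E_2))\ge\tfrac34 r\lambda_{\max}\ge\tfrac38\diam L(B(0,r))$. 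Any ball meeting both $L(E_1)$ and $L(E_2)$ then has radius at least their distance, hence measure $\gtrsim(\diam L(B(0,r)))^{2n+2}$, while $|L(E_i)\cap B|\le|L(B(0,r))|$; so the right-hand side of Gotoh's inequality is $\lesssim\bigl(|L(B(0,r))|/(\diam L(B(0,r)))^{2n+2}\bigr)^{\alpha}$, whereas the left-hand side is $\gtrsim1$ because each $E_i$ occupies a fixed fraction of $B(0,r)$. Letting $r\to0$ transfers the volume-to-diameter bound from $L$ to $f$, and quasiconformality follows. Your worry about upgrading the a.e.\ bound to the everywhere $\limsup$ is legitimate but is exactly what the paper's closing ``standard arguments'' absorb (cf.\ the equivalent definitions of quasiconformality in \cite{HeKoInv, HeKoActa}); it is not where the difficulty lies, and invoking Lemma~\ref{lemma:JN} here is a misdirection.

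For the sufficiency direction you propose to verify Gotoh's density condition from quasisymmetry and the $A_\infty$ relation of the pull-back measure and then apply Theorem~\ref{thm:gotoh} in the converse direction. That is a genuinely different route from the paper, which instead estimates $\dashint_{B'}|u\circ f^{-1}-u_B|$ directly via the distribution function, the $A_\infty$ comparison, and the John--Nirenberg inequality (Lemma~\ref{lemma:JN}). Your route can be made to work --- one applies the $A_\infty$ estimate to $f^{-1}$, which is also quasiconformal, to obtain a lower bound on $\mu(f(E_i)\cap B')/\mu(B')$ for a ball $B'$ comparable to $f(B)$ --- but as written it is only a sketch. In short: the skeleton is right, but the two quantitative steps that constitute the actual proof (the separated-balls construction for necessity and the oscillation estimate for sufficiency) are both missing.
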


The proof of Theorem~\ref{thm:equivalence} is divided into
Lemma~\ref{prop:quasi implies bmo} and Lemma~\ref{lemma2}. The proof
is new already in the Euclidean spaces, see \cite{Re-qc}.

\subsection{Sufficiency}
The sufficiency part is most conveniently stated in a much more
general setting than the Heisenberg group. It is certainly well known
among specialists that this part is true but nevertheless it seems to
lack references. We state it in full generality and note that the
Heisenberg group satisfies the assumptions of the following
lemma. This fact and all the necessary definitions can be checked in
\cite{HeKoActa, HKST}.

\begin{lemma}
\label{prop:quasi implies bmo}
Let $X$ be a linearly locally connected unbounded metric measure space
of $Q$-bounded geometry. Suppose that $f: X\to X$ is $K$-quasiconformal
and $u\in \BMO(X)$. Then
\[
\| Fu \|_{\BMO} \leq C\|u\|_{\BMO},
\]
where we write $Fu=u\circ f^{-1}$ and $C$ is a positive constant depending only on $K$ and the data of $X$.
\end{lemma}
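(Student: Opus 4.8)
The plan is to deduce the $\BMO$-bound from two ingredients: the fact that $f$ distorts balls and their measures in a controlled, power-law fashion, and the exponential decay of the distribution function of $u$ furnished by the John--Nirenberg lemma (Lemma~\ref{lemma:JN}).

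First I would recall the standard structural input: on a linearly locally connected, unbounded metric measure space of $Q$-bounded geometry, every $K$-quasiconformal homeomorphism is in fact $\eta$-quasisymmetric, with $\eta$ depending only on $K$ and the data; this is part of the Heinonen--Koskela theory, see \cite{HeKoActa, HKST}. Quasisymmetry gives the geometric control I need: for each ball $B=B(y_0,r)$ the preimage $f^{-1}(B)$ is comparable to a ball, i.e.\ there are radii $s\le t$ with $t\le H s$ ($H$ depending only on $K$ and the data) such that $B(x_0,s)\subseteq f^{-1}(B)\subseteq B(x_0,t)$, where $x_0=f^{-1}(y_0)$. By $Q$-regularity the measures of these three balls are mutually comparable, so in particular $\mu(f^{-1}(B))\approx \mu(B(x_0,t))$.

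The main obstacle, and the only genuinely nontrivial input, is a power-law comparison of measures under $f$: for every measurable $S\subseteq f^{-1}(B)$ one has
\[
\frac{\mu(f(S))}{\mu(B)} \le C\left(\frac{\mu(S)}{\mu(f^{-1}(B))}\right)^{\beta},
\]
with $C,\beta>0$ depending only on $K$ and the data. This is precisely the assertion that the volume derivative of a quasiconformal map of bounded geometry is an $A_\infty$-weight, which I would invoke from the Heinonen--Koskela theory \cite{HeKoActa, HKST} rather than reprove. Establishing this absolute-continuity-plus-reverse-H\"older statement in the metric setting is where all the work lies; everything else is soft.

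Granting these, I would estimate the oscillation of $Fu=u\circ f^{-1}$ on $B$ as follows. Since the mean oscillation is, up to a factor $2$, the infimum over constants $c$ of $\dashint_B |Fu-c|\dmu$, I choose $c=u_{B(x_0,t)}$. Writing the integral through the layer-cake formula and using that $f$ is a bijection with $f(S_\lambda)\subseteq B$,
\[
\int_B |Fu-c|\dmu = \int_0^\infty \mu\bigl(f(S_\lambda)\bigr)\dla,
\qquad S_\lambda=\{x\in f^{-1}(B): |u(x)-c|>\lambda\}.
\]
I bound $\mu(f(S_\lambda))$ by the displayed power-law inequality, and then bound $\mu(S_\lambda)$ by the John--Nirenberg estimate of Lemma~\ref{lemma:JN} applied to $u$ on the comparable ball $B(x_0,t)$, whose average is exactly $c$. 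The resulting exponential decay of $\mu(S_\lambda)/\mu(f^{-1}(B))$ survives being raised to the power $\beta$, so the $\lambda$-integral converges and yields $\dashint_B|Fu-c|\dmu\le C\|u\|_{\BMO}$ with $C$ depending only on $K$ and the data. Taking the supremum over all balls $B$ completes the proof.
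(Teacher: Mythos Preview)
Your proposal is correct and follows essentially the same approach as the paper: quasisymmetry to make $f^{-1}(B)$ comparable to a ball, the $A_\infty$ property of the volume derivative to get the power-law measure distortion, and then layer-cake plus John--Nirenberg to convert the exponential level-set decay into a $\BMO$ bound. The only cosmetic difference is that the paper phrases the ball comparison as ``for each target ball $B'$ there is a source ball $B$ with $B'\subset f(B)$ and $\mu(f(B))\le C\mu(B')$'' rather than sandwiching $f^{-1}(B)$ between concentric balls, but the estimates are the same.
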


\begin{proof}
  By \cite{HeKoActa} (see also \cite[Theorem 9.10]{Heinonen}) $f$ is
  quasisymmetric, and thexpull-back measure $\mu(f(E))$ and $\mu$ are known to be $A_\infty$-related. As a
  consequence of quasisymmetry (see \cite[Proposition 10.8]{Heinonen},
  for each ball $B'\subset f(X)\subseteq X$ there is a ball $B\subset
  X$ such that $B'\subset f(B)$ and $\mu(f(B)) \leq C\mu(B')$ for some
  positive constant $C$.

By Lemma~\ref{lemma:JN},
\begin{align*}
  \dashint_{B'}|Fu-&u_B|\, \dmu  = \frac{1}{\mu(B')}\int_0^\infty\mu(\{x\in B':\ |Fu(x)-u_B|>\lambda\})\, d\lambda \\
& \leq  \frac{C}{\mu(f(B))}\int_0^\infty\mu(\{x\in f(B):\ |Fu(x)-u_B|>\lambda\})\, d\lambda \\ 
& \leq \frac{C}{\mu(B)^\delta}\int_0^\infty\mu(\{x\in B:\ |u(x)-u_B|>\lambda\})^\delta\, d\lambda \\
  & \leq C\int_0^\infty \exp\left(-\delta
    A\lambda/\|u\|_{\BMO}\right)\, d\lambda \leq C\|u\|_{\BMO},
\end{align*}
where $C$ is independent of $B$. It is straightforward to verify that 
\[
\dashint_{B'}|Fu-(Fu)_{B'}|\, \dmu \leq 2\dashint_{B'}|Fu-u_B|\dmu,
\]
and hence the claim follows.
\end{proof}

\subsection{Necessity} 
The proof of the necessity part proceeds through approximation by
linear mappings. In order to do that, one has to assume 
differentiability. In Heisenberg group, we have to assume Pansu
differentiability in order to approximate by homogeneous
homomorphisms. 

Reimann's original proof in \cite{Re-qc} constructed an explicit
$\BMO$-function such that a differentiable map keeping it in $\BMO$
had to be quasiconformal. Our proof is less constructive, and one
motivation for it is to point out that the interaction between
$\BMO$-preserving and quasiconformality is closely related to
densities. Indeed, both classes of mappings have a characterization in
terms of densities. For the quasiconformal case, we refer the reader
to \cite{KoMaSh} and \cite{GeKe}. For the $\BMO$-maps the
characterization is stated in Theorem~\ref{thm:gotoh}. These
characterizations are, however, slightly different.

Our proof will work in Euclidean spaces and also in more general
Carnot groups as is discussed in
Remark~\ref{rmk}. 

\begin{lemma} \label{lemma2} Let $f: \mathbb{H}^{n} \to
  \mathbb{H}^{n}$ be an almost everywhere Pansu differentiable
  $\BMO$-preserving homeomorphism such that $f^{-1}(E)$ is a null set
for all null sets $E$. Then $f$ is quasiconformal.
\end{lemma}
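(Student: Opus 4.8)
The plan is to derive quasiconformality from the density characterization of $\BMO$-preserving maps in Theorem~\ref{thm:gotoh}, using Pansu differentiability to transfer the global (macroscopic) density inequality to an infinitesimal statement about the Pansu differential $L = D_P f(p)$ at a typical point $p$. Since $f$ is $\BMO$-preserving, Theorem~\ref{thm:gotoh} gives us, for every pair of measurable sets $E_1, E_2$,
\[
\sup_B \min_i \frac{\mu(E_i \cap B)}{\mu(B)} \leq K \Bigl( \sup_B \min_i \frac{\mu(f(E_i) \cap B)}{\mu(B)} \Bigr)^\alpha.
\]
First I would fix a point $p$ where $f$ is Pansu differentiable with differential $L$, and normalize by left-translations (which are isometries) and dilations so that $p = 0$ and $f(0) = 0$; this is legitimate because both the metric and the measure transform predictably under these maps and the density quantities in Theorem~\ref{thm:gotoh} are invariant. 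The goal becomes showing that $L$ is a \emph{quasiconformal} homogeneous homomorphism, i.e.\ that its eccentricity $K_L(0,r)$ is bounded; since $L$ is homogeneous this ratio is independent of $r$, and standard facts on the Heisenberg group then give $\limsup_{r\to 0} K_f(0,r) \leq K_L$.

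The heart of the argument is to choose the test sets $E_1, E_2$ cleverly so that the density inequality forces a bound on the eccentricity of $L$. The natural choice is to take $E_1$ and $E_2$ to be two opposite \emph{sectors} or thin slabs adapted to the directions in which $L$ expands and contracts: specifically, I would build sets that, near $0$, look like preimages under $L$ of a very eccentric Koranyi ball (an ellipsoid-type set with one short and one long axis). If $L$ has large eccentricity, then $L$ maps a round ball to a highly distorted set, and correspondingly there is a round ball $B$ on the \emph{image} side in which $f(E_1)$ and $f(E_2)$ both have small relative measure, while on the source side one can find a ball in which $\min_i \mu(E_i \cap B)/\mu(B)$ stays bounded below. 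Pushing the eccentricity of $L$ to infinity would then make the right-hand side of the Gotoh inequality tend to $0$ while the left-hand side stays bounded away from $0$, a contradiction; this yields a quantitative bound $K_L \leq K'(K,\alpha)$ depending only on the Gotoh constants.

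To make the infinitesimal passage rigorous, I would compute the densities for the linear model $L$ exactly first (the densities of sectors inside balls under a homogeneous homomorphism are scale-invariant and computable from the eigenvalue data of $L$), and then argue that the Pansu differentiability of $f$ at $0$ lets us replace $f$ by $L$ up to an error that is negligible at small scales. Concretely, on a ball $B(0,r)$ the map $f$ is $o(r)$-close to $L$ in the Carnot--Carath\'eodory metric, and because the measure is Ahlfors regular the symmetric differences $f(E_i) \,\triangle\, L(E_i)$ inside any comparable ball have measure that is a vanishing fraction of $\mu(B)$ as $r \to 0$. Hence the density quantities for $f$ converge to those for $L$, and the Gotoh inequality, valid for $f$ at every scale, survives in the limit as an inequality for the linear model $L$.

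The main obstacle I anticipate is precisely this transfer step: Theorem~\ref{thm:gotoh} is a \emph{global} statement (the supremum is over all balls in $X$), whereas I need to extract information at a single infinitesimal scale near $p$. The clean way to handle this is to exploit dilation invariance --- the rescaled maps $\delta_{1/r} \circ (f(0)^{-1} f) \circ \delta_r$ converge locally uniformly to $L$ by Pansu differentiability, and the density functionals are continuous under this convergence given the Ahlfors regularity of $\mu$ --- so that the blow-up limit inherits the Gotoh inequality. Controlling the error uniformly across the family of eccentric test sets (rather than for a single fixed pair $E_1, E_2$) is the delicate point, and I would address it by working with a fixed normalized family of sectors and only letting the eccentricity parameter vary after the limit has been taken, so that the measure-approximation estimates are uniform in $r$ for each fixed test configuration.
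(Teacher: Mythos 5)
Your overall strategy coincides with the paper's: both proofs combine Gotoh's density characterization (Theorem~\ref{thm:gotoh}) with Pansu differentiability at a typical point, testing the density inequality on a pair of sets adapted to the differential $L$ and passing to the infinitesimal scale. However, there is a genuine gap at the central step. To exploit Gotoh's inequality you must bound the \emph{supremum over all balls} of $\min_i \mu(f(E_i)\cap B)/\mu(B)$ from above; exhibiting, as you do, ``a round ball $B$ on the image side in which $f(E_1)$ and $f(E_2)$ both have small relative measure'' gives no control on that supremum whatsoever (a single ball only helps on the \emph{left-hand} side, where one needs a lower bound). The entire content of the necessity proof is the mechanism that forces \emph{every} ball to give a small minimum, and your sketch does not supply one. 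The paper's device is to take $E_1=B(x,\tfrac1{16}r)$ and $E_2=B(0,\tfrac1{16}r)$ with $x$ on $\partial B(0,\tfrac{15}{16}r)$ in the direction of maximal stretch of $L$: then $\dist(L(E_1),L(E_2))\gtrsim \diam L(B(0,r))$, so any ball meeting both images must have radius at least a fixed fraction of $\diam L(B(0,r))$, and since $L(E_i)\subset L(B(0,r))$ the minimum density in every such ball is at most $|L(B(0,r))|/(\diam L(B(0,r)))^{2n+2}$. That quantity being bounded below by the left-hand side is exactly the measure-theoretic roundness condition that yields quasiconformality by standard arguments.

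Your proposed test sets (opposite sectors, or preimages of eccentric Koranyi balls) make this step strictly harder: sectors based at the origin are not separated, so the ``any ball meeting both must be large'' argument is unavailable, and you would instead need a uniform-over-all-balls estimate of the angular density of image sectors under a homogeneous homomorphism of $\mathbb{H}^n$ --- doable in principle, but you give no such computation, and it is precisely where the vertical layer and the non-Euclidean dilations would cause trouble. Relatedly, much of the blow-up machinery you describe (rescaled maps $\delta_{1/r}\circ f\circ\delta_r$, continuity of the density functionals under local uniform convergence, uniformity over a family of eccentric configurations) is heavier than what is needed: with the separated-balls choice, the only use of Pansu differentiability is that $f$ and $L$ differ by $o(r)$ on $B(0,r)$, so the separation and diameter estimates for $L(E_i)$ transfer to $f(E_i)$ for small $r$ with harmless constants. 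I would encourage you to replace the sector construction by the two separated small balls and to state explicitly the ``every ball meeting both images is large'' inequality; as written, the argument does not close.
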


\begin{proof}
  Take any point of differentiability $z \in \mathbb{H}^{n}$ and let
  $B(z,r)$ be a ball centred at it. Since left-translations are
  isometries, we may assume that $z = 0$. Let $L$ be the homogeneous
  homomorphism given by Pansu differentiability. For any points $x,y
  \in \mathbb{H}^{n}$ there is $v \in \partial B(0,1)$ such that
\begin{align*}
d(L(x),L(y)) &= d( L(y)^{-1}L(x),0) =  d(L(y^{-1}x),0) \\
& = d(y^{-1}x,0) d(L(v),0)  
= d(x,y) d(L(v),0).
\end{align*} 
We denote by $\lambda_{\max}$ the maximum that the functional $v \mapsto d(L(v),0)$ attains in $\partial B(0,1)$. Recall that $L$ is continuous.

Take $x \in \partial B(0,\frac{15}{16}r)$ such that  
\[
d(L(x),L(0)) = \lambda_{\max} d(x,0) = \frac{15}{16}r \lambda_{\max}.
\]
Let $E_1 = B(x,\frac{1}{16}r)$ and $E_2 = B(0,\frac{1}{16}r)$, and take $a \in E_1, b \in E_2$. Then
\begin{align*}
  d(L(a),L(b)) & \geq d(L(x),L(0)) - d(L(x),L(a)) - d(L(0),L(b)) \\
  &\geq \frac{15}{16}r \lambda_{\max} - \frac{1}{16} r \lambda_{\max} -
  \frac{1}{16} r \lambda_{\max} > \frac{3}{4} r \lambda_{\max}.
\end{align*}
Hence 
\[
{\rm dist }(L(E_1),L(E_2)) \geq \frac{3}{4}r \lambda_{\max} \geq \frac{3}{8} {\rm diam} \, L(B(0,r)). 
\]

In Gotoh's Theorem \ref{thm:gotoh}, we may choose $B(0,r)$ to be the ball in the left hand side in order to obtain a lower bound for the supremum. The balls in the right hand side must meet both $E_1$ and $E_2$, so their radii have to exceed ${\rm dist }(E_1,E_2)$. Altogether we get 
\begin{align*}
  1 &\lesssim \sup_{B} \min _{i} \frac{\abs{ E_i \cap B} }{\abs{B}} \leq K \left( \sup_{B} \min_{i} \frac{\abs{L(E_i) \cap B}}{ \abs{B} } \right)^{\alpha} \\
  & \lesssim \left( \frac{ \abs{L(B(0,r))}}{({\rm dist
      }(E_1,E_2))^{2n+2} } \right)^{\alpha} \lesssim \left(
    \frac{\abs{L(B(0,r))}}{({\rm diam}\, L(B(0,r)))^{2n+2}}
  \right)^{\alpha}.
\end{align*}
This estimate holds for all $r > 0$. Letting $r \to 0$, we get the
same estimate for $f$, because near origin $f$ is $L$ up to an
epsilon. So
\[1 \lesssim \liminf_{r \to 0} \frac{\abs{f(B(0,r))}}{({\rm diam}\,
  f(B(0,r)))^{2n+2}} . 
\] 
and it follows by standard arguments that
$f$ is quasiconformal.
\end{proof}

\begin{remark} \label{rmk} It is easy to check that
  Theorem~\ref{thm:equivalence} is valid also in more general Carnot
  groups $\mathbb{G}$ (of step $k$). By these groups we mean simply
  connected Lie groups whose Lie algebra $\mathfrak{g}$ admits a
  nilpotent stratification up to step $k\geq 2$, i.e. $\mathfrak{g} =
  V_1\oplus V_2\oplus\cdots\oplus V_k$ and $[V_1,V_j]=V_{j+1}$ for
  $j=1,\ldots,k-1$ and $[V_1,V_k]=\{0\}$.

  A Carnot group is equipped with a family of non-isotropic dilations
  $\delta_\lambda:\mathfrak{g}\to\mathfrak{g}$ defined as
  $\delta_\lambda(\xi) = \lambda^j$ whenever $\xi\in V_j$,
  $j=1,\ldots,k$, where $\lambda$ is a positive real. The metric
  structure on $\mathbb{G}$ is given by the Carnot--Carath\'eodory
  distance $d$ for which $d(\delta_\lambda z,\delta_\lambda z') =
  \lambda d(z,z')$ for every $z,z'\in\mathbb{G}$. Denoting by $dg$ the
  bi-invariant Haar measure on $\mathbb{G}$, obtained by lifting via
  exponential map the Lebesgue measure on $\mathfrak{g}$, we see that
  $(d\circ \delta_\lambda)(g) = \lambda^Qdg$, where $Q=
  \sum_{j=1}^kj\dim(V_j)$ is the homogeneous dimension of
  $\mathbb{G}$. We refer the
  reader to \cite{Magnani} for more on calculus on stratified groups.
\end{remark}

\end{document}